%         "Using the LMS Class File"
% A Combined Sample File and Guide for Authors
% This file may be used as a template for writing a paper for submission to the LMS

\NeedsTeXFormat{LaTeX2e}

\documentclass{lms_modified}
%Include your preferred graphics and mathematics packages here,
%using the command \usepackage{}

%The \newtheorem command is used to define theorem-like environments
%that normally REQUIRE A PROOF, for example:
\newtheorem{theorem}{Theorem}[section] % 1st argument is your name for it
\newtheorem{lemma}[theorem]{Lemma}     % 2nd argument is what is printed

%To control the numbering sequence of these environments, see
%Lamport's book on LaTeX [2, p. 193].

%The \newnumbered command can be used to define environments or
%independent statements that DO NOT REQUIRE A PROOF. The usual ones are:
\newnumbered{assertion}{Assertion}    % 1st argument is your name for it
\newnumbered{conjecture}{Conjecture}  % 2nd argument is what is printed
\newnumbered{definition}{Definition}
\newnumbered{hypothesis}{Hypothesis}
\newnumbered{remark}{Remark}
\newnumbered{note}{Note}
\newnumbered{observation}{Observation}
\newnumbered{problem}{Problem}
\newnumbered{question}{Question}
\newnumbered{algorithm}{Algorithm}
\newnumbered{example}{Example}
\newunnumbered{notation}{Notation} % This is usually unnumbered

\usepackage{graphicx,amssymb,amstext,amsmath}

\usepackage{algorithm}
\usepackage{algpseudocode}
\algnewcommand\algorithmicinput{\textbf{Input:}}
\algnewcommand\INPUT{\item[\algorithmicinput]}
\algnewcommand\algorithmicoutput{\textbf{Output:}}
\algnewcommand\OUTPUT{\item[\algorithmicoutput]}
\algnewcommand\algorithmicoracle{\textbf{Oracle:}}
\algnewcommand\ORACLE{\item[\algorithmicoracle]}
\makeatletter
\algnewcommand{\LineComment}[1]{\Statex \hskip\ALG@thistlm \(\triangleright\) #1}
\makeatother

\newcommand{\p}{\mathfrak{p}}
\newcommand{\Z}{\mathbb Z}

\newcommand{\F}{\mathbb F}

\newcommand{\f}{\mathfrak f}

\newcommand{\D}{\Delta}

\usepackage{tikz}
\usetikzlibrary{matrix,arrows}

\newcommand{\ph}{(\phi/\p)}

% The numbering sequence of these environments can be controlled in the
% same way as for \newtheorem; see Lamport's book on LaTeX, p. 193.

% The default LMS numbering of equations in long papers is (1.1), (1.2), (2.1), etc.
% In short papers, to change the numbering to (1), (2), etc., 'uncomment' the next line.
% \simpleequations
% Otherwise, use the AMS \numberwithin command.

% TOP MATTER

\title[Factoring Polynomials using Drinfeld Modules]{Factoring Polynomials over Finite Fields  using Drinfeld Modules with Complex Multiplication}
% Do not use the \thanks{} command; use \extraline{} instead (see below).

\author{Anand Kumar Narayanan}

%\dedication{A dedication can be included here}

%Insert `2000 Mathematics Subject Classification' numbers here:
%\classno{11B83 (primary), 11J71, 37A45, 60G10 (secondary).
% Please refer to {\tt http://www.ams.org/msc/} for a list of codes}

%\extraline{NSF grant}

\begin{document}
\maketitle

\begin{abstract}
We present novel algorithms to factor polynomials over a finite field $\F_q$ of odd characteristic using rank $2$ Drinfeld modules with complex multiplication. The main idea is to compute a lift of the Hasse invariant (modulo the polynomial $f(x) \in \F_q[x]$ to be factored) with respect to a Drinfeld module $\phi$ with complex multiplication. Factors of $f(x)$ supported on prime ideals with supersingular reduction at $\phi$ have vanishing Hasse invariant and can be separated from the rest. A Drinfeld module analogue of Deligne's congruence plays a key role in computing the Hasse invariant lift. \\ \\
We present two algorithms based on this idea. The first algorithm  chooses Drinfeld modules with complex multiplication at random and has a quadratic expected run time. The second is a deterministic algorithm with $O(\sqrt{p})$ run time dependence on the characteristic $p$ of $\F_q$.
%The first algorithm generates a rank $2$ Drinfeld module $\phi$ with complex multiplication by an imaginary quadratic extension of the rational function field $\F_q(x)$ at random. At a prime ideal in $\F_q[x]$, $\phi$ has supersingular reduction with probability half with the Hasse invariant at the \\ \\
%At a prime ideal of corresponding to an irreducible factor of the polynomial to be factored, $\phi$ is  likely to be supersingular or ordinary with equal probability. The Hass
\end{abstract}
%\part{Use this type of header for very long papers only}
% use lowercase except for proper names
%A rough outline of the polynomial factorization algorithm using Drinfeld module with complex multiplication follows. I have refrained from trying to optimize and it is currently in its simplest iterative form. The algorithm could be stated and implemented with no reference to Drinfeld modules. The latter appear only in the proof of correctness. Curiously, the algorithm does not involve separate distinct/equal degree factorization. \\ \\
%In the algorithm, the degree of the polynomial factored is assumed to be at most the square root of the field size. This assumption is without loss of generality for otherwise we may factor over a slightly larger extension.\\ \\
%Apart from practical relevance, a few largely speculative questions I am thinking about are listed at the end. I would be grateful for any insights or comments.
\section{Introduction}
%\subsection{Randomized Polynomial Factorization over Finite Fields}
Let $q$ be a power of a an odd prime $p$ and let $\F_q$ denote the finite field with $q$ elements. The univariate polynomial factorization problem over $\F_q$ is,\\
\begin{itemize}
 \item \textsc{Polynomial Factorization:} \textit{Given a monic square free $f(x) \in \mathbb{F}_q[x]$ of degree $n$, write $f(x)$ as a product of its monic irreducible factors.}\\
\end{itemize}
A square free polynomial is one that does not contain a square of an irreducible polynomial as a factor. The square free input assumption is without loss of generality \cite{knu,yun}. Berlekamp showed that \textsc{Polynomial Factorization} can be solved in randomized polynomial time \cite{ber} and there is an extensive line of research \cite{cz,gs,ks} leading to the fastest known algorithm \cite{ku} with expected run time $\widetilde{O}(n^{3/2} \log q + n \log^2q)$. The soft $\widetilde{O}$ notation suppresses $n^{o(1)}$ and $\log^{o(1)} q$ terms for ease of exposition.\\ \\
The use of Drinfeld modules to factor polynomials over finite fields originated with Panchishkin and Potemine \cite{pp} whose algorithm was rediscovered by van der Heiden \cite{vdH}. These algorithms, along with the author's Drinfeld module black box Berlekamp algorithm \cite{nar} are in spirit Drinfeld module analogues of Lenstra's elliptic curve method to factor integers \cite{len}. The Drinfeld module degree estimation algorithm of \cite{nar} uses Euler-Poincare charactersitics of Drinfeld modules to estimate the factor degrees in distinct degree factorization. A feature common to the aforementioned algorithms is their use of random Drinfeld modules, which typically don't have complex multiplication.\\ \\
Our first algorithm for \textsc{Polynomial Factorization} is a randomized algorithm with $$\widetilde{O}(n^2 \log q + n \log^2 q)$$ expected run time. The novelty is the use of Drinfeld modules with complex multiplication.\\ \\
The algorithm constructs a random rank $2$ Drinfeld module $\phi$ with complex multiplication by an imaginary quadratic extension of the rational function field $\F_q(x)$  with class number $1$. At roughly half of the prime ideals $\p$ in $\F_q[x]$, $\phi$ has supersingular reduction. The Hasse invariant of $\phi$ at a prime ideal $\p$ vanishes if and only if $\phi$ has supersingular reduction at $\p$. A Drinfeld module analogue of Deligne's congruence, due to Gekeler \cite{gek}, allows us to compute a certain lift of Hasse invariants modulo the polynomial $f(x)$ we are attempting to factor. As a consequence, this lift vanishes exactly modulo the irreducible factors of $f(x)$ that correspond to the primes with supersingular reduction. In summary, we get to separate the irreducible factors corresponding to primes with supersingular reductions from those with ordinary reduction.\\ \\
The algorithm itself is stated in a simple iterative form with no reference to Drinfeld modules although Drinfeld modules are critical in its conception and analysis. The run time complexity is identical to that of the commonly used Cantor-Zassenhaus \cite{cz} algorithm. Although slower than subquadratic time algorithms such as \cite{ks,ku}, unlike these subquadratic algorithms we do not rely on fast modular composition or fast matrix multiplication. This should make our algorithm easy to implement in practice.\\ \\
The question of whether \textsc{Polynomial Factorization} is in deterministic polynomial time is a central outstanding open problem. Berlekamp \cite{ber1} reduced \textsc{Polynomial Factorization} to finding roots of a polynomial in a prime order finite field and through this reduction proposed a deterministic algorithm. Shoup was the first to prove rigorous unconditional run time bounds for Berlekamp's deterministic algorithm and its variants \cite{sho}. The most difficult and hence interesting setting for these deterministic algorithms is when the underlying field $\F_q$ is of large prime order. That is $q=p$ is a large prime. In this case, Shoup's bound establishes the best known unconditional deterministic run time bound of $\widetilde{O}(\sqrt{p})$ suppressing the dependence on the degree of the polynomial factored. Assuming the generalized Riemann hypothesis, quasi polynomial time algorithms (c.f.\cite{evd}) and in certain special cases, polynomial time algorithms (c.f. \cite{hua}) are known. \\ \\
Our second algorithm is an unconditional deterministic algorithm for \textsc{Polynomial Factorization} using Drinfeld modules with complex multiplication. Instead of picking Drinfeld modules with complex multiplication at random, we fix a natural ordering. The rest of the algorithm is nearly identical to the earlier randomized version. 
Remarkably, we were able to translate Shoup's proof to apply to our algorithm. We prove a worst case running time of $\widetilde{O}(\sqrt{p})$, again suppressing the dependence on the degree of the polynomial factored.\\ \\
Curiously, our deterministic algorithm applies directly to the equal degree factorization problem: that is to factor a given a monic square free polynomial all of whose irreducible factors are of the same degree. Prior deterministic algorithms relied on reductions to the root finding problem, either through linear algebra or other means.\\ \\
The paper is organized as follows. In \S~\ref{drinfeld_section}, Drinfeld modules are introduced and the general algorithmic strategy is outlined with emphasis on the role played by Hasse invariants and Deligne's congruence. In \S~\ref{randomized_section}, the efficient construction of Drinfeld modules with complex multiplication is presented followed by the description of our first algorithm. The randomized algorithm is then rigorously analyzed using function field arithmetic. In \S~\ref{deterministic_section}, we present and analyze the deterministic version.
%\subsection{Drinfeld Modules, Hasse Invariants and Deligne's Congruence}
\section{Rank-2 Drinfeld Modules}\label{drinfeld_section}
\noindent Let $A=\F_q[x]$ denote the polynomial ring in the indeterminate $x$ and let $K$ be a field with a non zero ring homomorphism $\gamma:A \rightarrow K$. Necessarily, $K$ contains $\F_q$ as a subfield. Fix an algebraic closure $\bar{K}$ of $K$ and let $\tau: \bar{K} \longrightarrow \bar{K}$ denote the $q^{th}$ power Frobenius endomorphism. The ring of endomorphisms of the additive group scheme $\mathbb{G}_a$ over $K$ can be identified with the skew polynomial ring $K\langle \tau \rangle$ where $\tau$ satisfies the commutation rule $\forall u \in K, \tau u = u^q \tau$.
A rank-2 Drinfeld module over $K$ is (the $A$-module structure on $\mathbb{G}_a$ given by) a ring homomorphism
$$\phi : A \longrightarrow K\langle \tau \rangle$$ 
$$\ \ \ \ \ \ \ \ \ \ \ \ \ \ \ \ \ \ \ \ \ \ x \longmapsto \gamma(x) + g_\phi \tau + \Delta_\phi \tau^2$$
for some $g_\phi \in K$ and $\Delta_\phi \in K^\times$. For $a \in A$, let $\phi_a$ denote the image of $a$ under $\phi$. We will concern ourselves primarily with rank $2$ Drinfeld modules and unless otherwise noted, a Drinfeld module will mean a rank $2$ Drinfeld module.\\ \\
%To every $A$-algebra $L$ over $\bar{K}$, the Drinfeld module $\phi$ endows a new $A$-module structure (which, we denote by $\phi(L)$) through the $A$-action $$\forall f \in L, \forall a \in A, a*f = \phi_a(f).$$  
%For every $A$-algebra homomorphism $\rho:L \longrightarrow L$,  $\forall a\in A$ and $\forall f \in L$, $\rho(\phi(f)) = \phi(\rho(f))$. Thus $\rho$, when thought of as a map from $\phi(L) \longrightarrow \phi(L)$, is an $A$-module homomorphism. For every direct product $L \times L^\prime$ of $A$-algebras over $\bar{K}$, we hence have the corresponding direct sum of $A$-modules $$\phi(L \times L^\prime) \cong \phi(L) \oplus \phi(L^\prime).$$
Henceforth, we restrict our attention to Drinfeld modules $\phi:A \longrightarrow \F_q(x)\langle\tau\rangle$ over $\F_q(x)$ (with $\gamma : A \rightarrow \F_q(x)$ being the inclusion (identity), $g_\phi(x)\in A$ and $\Delta_\phi(x)\in A^\times$) and their reductions.\\ \\
%For a proper ideal $\f \subset A$, let $\F_\f$ denote $A/\f$. 
For a prime ideal $\p \subset A$, if $\Delta_\phi$ is non zero modulo $\p$, then the reduction $\phi/\p := \phi \otimes A/\p$ of $\phi$ at $\p$ is defined through the ring homomorphism $$\phi/\p : A \longrightarrow \F_\p\langle \tau \rangle$$ $$\ \ \ \ \ \ \ \ \ \ \ \ \ \ \ \ \ \ \ \ \ \ \ \ \ \ \ \ \ \ \ \ \ \ \ \ \ \ \ \ \ \ \ \ \ \ \ \ t \longmapsto t + (g_\phi \mod \p) \tau + (\Delta_\phi\mod\p) \tau^2$$
and the image of $a \in A$ under $\phi/\p$ is denoted by $(\phi/\p)_a$. Even if $\Delta_{\phi}$ is zero modulo $\p$, one can still obtain the reduction $(\phi/\p)$ of $\phi$ at $\p$ through minimal models of $\phi$ (c.f. \cite{gek1}). We refrain from addressing this case since our algorithms do not require it.\\ \\
%As before, the Drinfeld module $\phi/\p$ endows a new $A$-module structure (denoted by $\ph(L)$) to every $A$-algebra $L$ over the algebraic closure of $\F_\p$ through the $A$-action $$\forall f \in L, \forall a \in A, a*f = \ph_a(f)$$ and for every direct product $L \times L^\prime$ of $A$-algebras over the algebraic closure of $\F_\p$, $$\ph(L \times L^\prime) \cong \ph(L) \oplus \ph(L^\prime).$$
%Further, for every $A$-algebra $L$ over the algebraic closure of $\F_\p$, $\phi(L) \cong \ph(L)$.\\ \\
For $f(x) \in A$, denote by $(f(x))$ the ideal generated by $f(x)$ and by $\deg(f)$ the degree of $f(x)$. For a non zero ideal $\f \subset A$, let $\deg(\f)$ denote the degree of its monic generator. For $f(x),g(x) \in A$, by $\gcd(f(x),g(x))$ we mean the monic generator of the ideal generated by $f(x)$ and $g(x)$. Abusing notation, by $\gcd(\alpha,f(x))$ for some $f(x) \in A, \alpha \in A/(f(x))$ we really mean the gcd of $f(x)$ and a lift of $\alpha$ to $A$.
\section{Hasse Invariants and Deligne's Congruence}
Let $\p \subset A$ be a prime not dividing $\Delta_\phi$. Let $p \in A$ be the monic generator of $\p$. The Hasse invariant $h_{\phi,\p}(x) \in A$ of $\phi$ at $\p$ is the coefficient of $\tau^{\deg(p)}$ in the expansion $$\ph_p  = \sum_{i=0}^{2\deg(p)} h_i(\ph)(x) \tau^i \in A\langle \tau \rangle.$$
%and it is related to the Frobenius trace by
%\begin{equation}\label{hasse_trace}
%a_{\phi,\p} = \epsilon_{\phi,\p}H_{\phi,\p}.
%\end{equation}
The Drinfeld module $\phi$ has supersingular reduction at $\p$ if and only if $\p$ divides $(h_{\phi,\p}(x))$ \cite{gos}. If the choice of $\phi$ is clear from context, we will call $\p$ supersingular.\\ \\
Recursively define a sequence $(r_{\phi,k}(x) \in A,k \in \mathbb{N})$ as $r_{\phi,0}(x):=1$, $r_{\phi,1}(x):=g_\phi$ and for $m>1$,
\begin{equation}\label{eisenstein_recurrence}
r_{\phi,m}(x) := \left(g_\phi(x)\right)^{q^{m-1}}r_{\phi,m-1}(x) - (x^{q^{m-1}}-x) \left(\D_\phi(x)\right)^{q^{m-2}} r_{\phi,m-2}(x) \in A.
\end{equation}
Gekeler (c.f.\cite{gek}[Eq 3.6, Prop 3.7]) showed that $r_{\phi,m}(x)$ is the value of the normalized Eisenstein series of weight $q^{m}-1$ on $\phi$ and established Deligne's congruence for Drinfeld modules, which ascertains for any $\p$ of degree $k \geq 1$ with $\Delta_\phi(x) \neq 0 \mod \p$ that 
\begin{equation}\label{deligne_congruence}
 h_{\phi,\p}(x) = r_{\phi,k}(x) \mod \p.
\end{equation}
Hence $r_{\phi,k}(x)$ is in a sense a lift to $A$ of all the Hasse invariants of $\phi$ at primes of degree $k$.\\ \\
In particular, if $\phi$ has supersingular reduction at a $\p$ of degree $k$, then $h_{\phi,\p}(x)=0$. By Deligne's congruence, $r_{\phi,k}(x) = 0 \mod \p$. From the recurrence \ref{eisenstein_recurrence}, it follows that $r_{\phi,k+1}(x) = 0 \mod \p$ since $\p$ divides $x^{q^k}-x$. Plugging $r_{\phi,k}(x) = r_{\phi,k+1}(x)= 0 \mod \p$ into the recurrence \ref{eisenstein_recurrence} yields  
\begin{equation}\label{supersingular_zero}
 r_{\phi,j}(x) = 0 \mod \p , \forall j \geq k.
\end{equation}
Likewise, if $\phi$ does not have supersingular reduction at a $\p$ of degree $k$, then by \cite{cor}[Lem 2.3]
\begin{equation}\label{supersingular_nonzero}
 r_{\phi,j}(x) \neq 0 \mod \p , \forall j \geq k.
\end{equation}
This suggests that we could use a Drineld module $\phi$ in a polynomial factorization algorithm to separate supersingular primes from those that are not. For most Drinfeld modules, the density of supersingular primes is too small for this to work. However, for a special class, Drinfeld modules with complex multiplication, the density of supersingular primes is $1/2$. 
\section{Drinfeld Modules with Complex Multiplication}\label{randomized_section}
A Drinfeld module $\phi$ is said to have complex multiplication by an imaginary quadratic extension $L/\F_q(x)$ if $End_{\F_q(x)}(\phi)\otimes_A \F_q(x) \cong L$. By imaginary, we mean the prime $(1/x)$ at infinity in $\F_q(x)$ does not split in $L$. For a $\phi$ with complex multiplication by $L/\F_q(x)$, a prime $\p$ that is unramified in $L/\F_q(x)$  is supersingular if and only if $\p$ is inert in $L/\F_q(x)$.\\ \\
This suggests the following  strategy to factor a monic square free polynomial $f(x) \in A$. Say $f(x)$ factors into monic irreducibles as $f(x) = \prod_i p_i(x)$. Pick a Drinfeld module $\phi$ with complex multiplication by some imaginary quadratic extension $L/\F_q(x)$. Compute $r_{\phi,k}(x) \mod (f(x))$ for some $k \leq \deg(f)$. By equations \ref{supersingular_zero} and \ref{supersingular_nonzero}, $$\gcd(r_{\phi,k}(x) \mod (f(x)), f(x)) = \prod_{(p_i)\ inert\ in\ L/\F_q(x), \deg(p_i)\leq k}p_i(x)$$ is a factor of $f$. Since for every degree, roughly half the primes of that degree are inert in $L/\F_q(x)$, the factorization  thus obtained is likely to be non trivial. 
\section{Randomized Polynomial Factorization using Drinfeld Modules with Complex Multiplication}
\subsection{Constructing Drinfeld Modules with Complex Multiplication}\label{drinfeld_construction_subsection}
Our strategy is to pick an $a \in \F_q$ at random and construct a Drinfeld module $\phi$ with complex multiplication by the imaginary quadratic extension $\F_q(x)(\sqrt{d(x)})$ of discriminant $d(x):=x-a$. From \cite{dor}, the Drinfeld module $\phi^\prime$ with $$g_{\phi^\prime}(x):=\sqrt{d(x)}+\left(\sqrt{d(x)}\right)^q, \D_{\phi^\prime}(x) := 1$$ has complex multiplication by $\F_q(x)(\sqrt{d(x)})$.\\ \\
However, $\phi^\prime$ has the disadvantage of not being defined over $A$ since $g_{\phi^\prime}(x) \notin A$.\\ \\
We construct an alternate $\phi$, that is isomorphic to $\phi^\prime$ but defined over $A$. The $J$-invariant \cite{gek} of $\phi^\prime$ is $$J_{\phi^\prime}(x) := \frac{g_{\phi^\prime}(x)^{q+1}}{\D_{\phi^\prime}(x)} = d(x)^{\frac{q+1}{2}}\left(1+d(x)^{\frac{q-1}{2}}\right)^{q+1}.$$
With the knowledge that two Drinfeld modules with the same $J$-invariant are isomorphic, we construct the Drinfeld module $\phi$ satisfying $$g_\phi(x)^{q+1} = (J_{\phi^\prime}(x))^2, \D_{\phi}(x)= J_{\phi^\prime}(x)$$ thereby ensuring $$J_{\phi}(x)=J_{\phi^\prime}(x).$$ Further, this assures that $\phi$ is defined over $A$ since

$$g_\phi(x):=d(x)(1+d(x)^{\frac{q-1}{2}}),\D_\phi(x):=d(x)^{\frac{q+1}{2}}(1+d(x)^{\frac{q-1}{2}})^{q+1}.$$
In summary, $\phi$ has complex multiplication by $\F_q(x)(\sqrt{d(x)})$) and is defined over $A$.
\subsection{Polynomial Factorization using Drinfeld Modules with Complex Multiplication}
We now state an iterative randomized algorithm to factor polynomials over finite fields using Drinfeld modules with complex multiplication. Curiously, it can be stated and implemented with no reference to Drinfeld modules.
\begin{algorithm}[H]
\caption{Polynomial Factorization}\label{factoring_algorithm}
\begin{algorithmic}[1]
\INPUT Monic square free $f(x) \in A$, positive integer $m \leq \deg(f).$%$m \leq \sqrt{q}/2$.
\OUTPUT Monic irreducible factors of $f(x)$ of degree at most $m$.
\State Perform root finding and output (and remove) all linear factors of $f(x)$.
\State Pick $a \in \F_q$ uniformly at random and set 
\LineComment $d(x):=x-a$.
\LineComment $g_\phi(x):=d(x)(1+d(x)^{\frac{q-1}{2}})$.
\LineComment $\D_\phi(x):=d(x)^{\frac{q+1}{2}}(1+d(x)^{\frac{q-1}{2}})^{q+1}$.
\State Initialize:
\LineComment $r_0(x):=  1 \mod f(x), r_1(x):=  g_\phi(x) \mod f(x)$. %\LineComment $g_0(x):=  1 \mod f(x)$,
%\LineComment $\D_0(x) := \D_\phi(x) \mod f(x)$.
%\State Set $r_{0},g_0\leftarrow 1 \mod f$, $r_1,g_1 \leftarrow g_\phi \mod f$, $T_0 \leftarrow t \mod f$, $\D_0\leftarrow \D_\phi \mod f$.
\LineComment Using multipoint evaluation, for $k \in \{1,2,\ldots,m\}$  compute
$$g_\phi(x)^{q^k} \mod f(x), \D_\phi(x)^{q^k} \mod f(x), x^{q^k} \mod f(x).$$
%\LineComment $g_k(x):=g_\phi(x)^{q^k} \mod f(x), \D_k(x):\D_\phi^{q^k}(x) \mod f(x), X_k:= x^{q^k} \mod f(x)$.
%\LineComment
%\LineComment
\LineComment Set $f_{ss}(x): = 1,f_{or}(x): =1 ,m_{ss}:=1 $ and $m_{or}:=1$.
\State For $k = 2$ to $m$
%\LineComment Set $g_k := g_{k-1}^q \mod f$, $\D_k := \D_{k-1}^q \mod f$ and $T_k := T_{k-1}^{q} \mod f$.
\LineComment Compute $$r_k(x) := g_\phi(x)^{q^{k-1}}r_{k-1}(x) - (x^{q^{k-1}}-x) \D_\phi(x)^{q^{k-2}} r_{k-2}(x) \mod f(x).$$
\LineComment If $\gcd(r_{k}(x),f(x))$ has degree $k$, mark it as an output.
\LineComment Else, $f_{ss}(x):= f_{ss}(x)\gcd(r_{k}(x),f(x))$ and $m_{ss} := k$.
\LineComment If $\gcd(x^{q^k}-x,f(x))/\gcd(r_{k}(x),f(x))$ has degree $k$, mark it as an output.
\LineComment Else, $f_{or}(x):= f_{or}(x)\gcd(x^{q^k}-x,f(x))/\gcd(r_{k}(x),f(x))$ and $m_{or} := k$.
\State Recursively call $(f_{ss}(x),m_{ss})$ and $(f_{or}(x),m_{or})$ to factor $f_{ss}(x)$ and $f_{or}(x)$.
\end{algorithmic}
\end{algorithm}
The assumption $\sqrt{q} \geq 100 n$ in Algorithm \ref{factoring_algorithm} can be made without loss of generality. For if $\sqrt{q} < 100 n$, we might choose to factor over a slightly larger field $\F_{q^\prime}$ where $q^\prime$ is the smallest power of $q$ such that $\sqrt{q^\prime} > 100 n$ and still recover the factorization over $\F_q$ (c.f. \cite[Remark 3.2]{nar}). Further, the running times are only affected by logarithmic factors.\\ \\
In Step $1$, all the linear factors of $f(x)$ are found and removed using a root finding algorithm. \\ \\%It takes $\widetilde{O}(n \log^2 q)$ time using the iterated Frobenius algorithm of von zur Gathen and Shoup \cite{gs} implemented using the Kedlaya-Umans \cite{ku} fast modular composition.\\ \\
In Step $2$, we choose $a \in \F_q$ at random and construct a Drinfeld module $\phi$ with complex multiplication by $\F_q(x)(\sqrt{x-a})$. The primes that divide $\D_\phi(x)$ are precisely $\{(x-b),  b \in \F_q, \sqrt{b-a} \notin \F_q\} \cup \{(d(x))\}$. We might run into issues of bad reduction if the polynomial $f(x)$ to be factored had roots. It is to prevent this, we performed root finding in Step $1$.\\ \\
In Step $4$, from the recurrence \ref{eisenstein_recurrence}, it follows that the $r_k(x)$ computed coincides with the degree $k$ Hasse invariant lift $r_{\phi,k}(x)$. Hence at iteration $k$ in Step $4$, by Deligne's congruence \ref{deligne_congruence}, a degree $k$ monic irreducible factor $p(x)$ of $f(x)$ divides $r_k(x)$ if and only if $(p(x))$ is supersingular with respect to $\phi$. In particular, $\gcd(r_k(x),f(x))$ is the product of all degree $k$ irreducible factors of $f(x)$ that are supersingular with respect to $\phi$. If there is only one such factor, we output it. Else, the product is multiplied to $f_{ss}(x)$ to be split recursively later. Likewise, at iteration $k$ in Step $4$, $\gcd(x^{q^k}-x,f(x))/\gcd(r_{k}(x),f(x))$ is the product of all degree $k$ irreducible factors of $f(x)$ that are ordinary with respect to $\phi$. If there is only one such factor, we output it. Else, the product is multiplied to $f_{or}(x)$ to be split recursively later.\\ \\
The following Lemma \ref{splitting_lemma} states that any two distinct factors of $f(x)$ of the same degree are neither both supersingular nor both ordinary with probability $1/2$. This ensures that the splitting into supersingular and ordinary factors in Step $4$ is random enough that the recursion depth of our algorithm is logarithmic in $m$. The run time of our algorithm is dominated by the iteration in Step $4$ and the multipoint evaluation (c.f \cite{gs}) in Step $3$, both taking $\widetilde{O}(n^2 \log q + n \log^2 q)$ time.
%To claim our algorithm is correct, it suffices to show for every two distinct monic irreducibles $p_1,p_2 \in A$ of degree $k>2$ dividing $f$, that with constant probability $r_k = 0 \mod p_i$ for exactly one $i\in\{1,2\}$. We next argue that this is indeed the case with the probability constant being $1/2$. The $r_k$ computed in our algorithm coincide with $r_{\phi,k} \mod f$. By equation \ref{deligne_congruence}, for $i \in {1,2}$, $r_k = h_{\phi,\p_i} \mod p_i$. Hence it suffices to show for a random $\phi$ as chosen in our algorithm, with constant probability exactly one of $(p_1),(p_2)$ is supersingular.
\begin{lemma}\label{splitting_lemma}
Let $p_1(x),p_2(x) \in A $ be two distinct monic irreducible polynomials of degree $k$ where $1<k \leq \sqrt{q}$. Let $\phi$ be a Drinfeld module with complex multiplication by the imaginary quadratic extension $\F_q(x)(\sqrt{x-a})$ where $a \in \F_q$ is chosen at random. With probability close to $1/2$, exactly one of $(p_1(x))$ or $(p_2(x))$ is supersingular with respect to $\phi$.
\end{lemma}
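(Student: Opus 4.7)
The plan is to translate the supersingular-versus-split dichotomy into a quadratic character sum over $\F_q$ and apply the Weil bound.

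Using the criterion recalled in Section~\ref{randomized_section}, an unramified prime $(p_i)$ is supersingular with respect to $\phi$ if and only if it is inert in $L = \F_q(x)(\sqrt{x-a})/\F_q(x)$. Fixing a root $\alpha_i \in \bar{\F}_q$ of $p_i$, the residue field at $(p_i)$ is $\F_{q^k}$ and inertness is equivalent to $\alpha_i - a$ being a non-square in $\F_{q^k}^\times$; since $\deg p_i = k > 1$ and $a \in \F_q$, the degenerate case $\alpha_i = a$ never occurs. Thus, writing $\chi_k$ for the quadratic character of $\F_{q^k}^\times$, the event ``exactly one of $(p_1),(p_2)$ is supersingular'' is precisely the event $\chi_k(\alpha_1 - a) \neq \chi_k(\alpha_2 - a)$.

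Next I would descend to the base field. From $(q^k-1)/2 = \tfrac{q-1}{2}(1+q+\cdots+q^{k-1})$ one gets $\chi_k(\beta) = \chi_1(N_{\F_{q^k}/\F_q}(\beta))$ for $\beta \in \F_{q^k}^\times$, where $\chi_1$ is the quadratic character on $\F_q^\times$. Since $a \in \F_q$ satisfies $a^{q^j} = a$, applying Frobenius gives $(\alpha_i - a)^{q^j} = \alpha_i^{q^j} - a$, so
\[
N_{\F_{q^k}/\F_q}(\alpha_i - a) \;=\; \prod_{j=0}^{k-1}(\alpha_i^{q^j} - a) \;=\; (-1)^k\, p_i(a).
\]
The common factor $\chi_1((-1)^k)$ cancels when comparing $\chi_k(\alpha_1-a)$ with $\chi_k(\alpha_2-a)$, so the event of interest is equivalent to $\chi_1(p_1(a)\, p_2(a)) = -1$. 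Moreover $p_1(a), p_2(a)$ are never zero on $\F_q$ since each $p_i$ is irreducible of degree $k > 1$.

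Finally I would invoke the Weil bound. The polynomial $p_1 p_2 \in \F_q[a]$ has degree $2k$, is squarefree (a product of two distinct irreducibles) and is therefore not a constant multiple of a square, so
\[
\Bigl|\sum_{a \in \F_q} \chi_1(p_1(a)\, p_2(a))\Bigr| \;\le\; (2k-1)\sqrt{q}.
\]
The target probability equals $\tfrac{1}{2} - \tfrac{1}{2q}\sum_a \chi_1(p_1(a)p_2(a))$, which is within $(2k-1)/(2\sqrt{q})$ of $1/2$. Under the standing assumption $\sqrt{q} \geq 100 n \geq 100 k$ used for Algorithm~\ref{factoring_algorithm}, the deviation is at most $1/100$, justifying the phrase ``close to $1/2$''.

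The main technical point to get right is the character-descent identity $\chi_k(\alpha_i - a) = \chi_1(N(\alpha_i - a))$ together with its explicit evaluation as $(-1)^k p_i(a)$; given these, the Weil bound takes over in a routine way. A minor side remark would handle the at most $O(k)$ values of $a \in \F_q$ at which $(p_1)$ or $(p_2)$ divides $\Delta_\phi$ (so that $\phi$ has bad reduction at $(p_i)$): such $a$ contribute only $O(k/q)$ to the probability and are absorbed into the same error term.
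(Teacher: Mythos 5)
Your argument is correct, but it follows a genuinely different route from the paper's. The paper reformulates ``exactly one of $(p_1),(p_2)$ splits in $\F_q(x)(\sqrt{x-a})$'' via Carlitz's quadratic reciprocity for function fields as the condition that $(x-a)$ splits in exactly one of the hyperelliptic extensions $K_i = \F_q(x)(\sqrt{p_i(x)})$; it then observes that $K_1K_2/\F_q(x)$ is biquadratic with Galois group $\Z/2\Z\oplus\Z/2\Z$, identifies the target event with the Artin symbol of $(x-a)$ landing in one of two prescribed cosets, and counts such degree-one primes via the Chebotarev density theorem together with the Riemann--Hurwitz formula to control the genus of $K_1K_2$. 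You instead go directly: supersingular at $(p_i)$ means $\alpha_i - a$ is a non-square in the residue field $\F_{q^k}$, and the norm identity $\chi_k(\alpha_i - a) = \chi_1\bigl(N_{\F_{q^k}/\F_q}(\alpha_i - a)\bigr) = \chi_1\bigl((-1)^k p_i(a)\bigr)$ collapses everything to the base field, reducing the lemma to the Weil bound for the character sum $\sum_a \chi_1(p_1(a)p_2(a))$. Both proofs ultimately rest on the Riemann hypothesis for curves over $\F_q$ (the Weil bound is what makes the effective Chebotarev theorem tick), but your version is more elementary and self-contained: it avoids quadratic reciprocity, avoids genus computations, and gives a slightly tighter error term $(2k-1)/(2\sqrt q)$ versus the paper's $2(k-1)/\sqrt q$. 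One small remark: for the specific Drinfeld module constructed in \S~\ref{drinfeld_construction_subsection}, every prime dividing $\Delta_\phi$ has degree one, so since $k>1$ the bad-reduction caveat you append is automatically vacuous --- harmless, but unnecessary here.
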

\begin{proof}
Since $k > 1$ neither $(p_1(x))$ nor $(p_2(x))$ ramify in $\F_q(x)(\sqrt{x-a})$. Hence, the probability that exactly one of $(p_1(x))$ or $(p_2(x))$ is supersingular with respect to $\phi$ is precisely the probability that exactly one of $(p_1),(p_2)$ splits in $\F_q(x)(\sqrt{x-a})/\F_q(x)$.\\ \\
%That is, exactly one of $(p_1),(p_2)$ splits in $\F_q(t)(\sqrt{d})/\F_q(t)$.
For $i\in\{0,1\}$, let $K_i:=\F_q(x)(\alpha_i)$ be the hyperelliptic extension of $\F_q(t)$ obtained by adjoining a root $\alpha_i$ of $y^2-p_i(x)$. By quadratic reciprocity over function fields \cite{car}, since $p_1(x)$ and $p_2(x)$ have the same degree, exactly one of $(p_1(x)),(p_2(x))$ splits in $\F_q(x)(\sqrt{x-a})$ if and only if $x-a$ is split in exactly one of $K_1,K_2$. That is, $(x-a)$ is neither completely split nor completely inert in the composite $K_1K_2$. Since $p_1(x)$ and $p_2(x)$ are distinct, $K_1$ and $K_2$ are linearly disjoint over $\F_q(x)$. Further, $K_1K_2$ is Galois over $\F_q(t)$ with $$Gal(K_1K_2/\F_q(x)) \cong Gal(K_1/\F_q(x)) \times Gal(K_2/\F_q(x)) \cong \Z/2\Z \oplus \Z/2\Z.$$  For $(x-a)$ to be neither totally split nor totally inert, the Artin symbol $$((x-a),K_1K_2/\F_q(x)) \in Gal(K_1K_2/\F_q(x))$$ has to be either $(0,1)$ or $(1,0)$ under the isomorphism $Gal(K_1K_2/\F_q(x)) \cong \Z/2\Z \oplus \Z/2\Z$. Applying Chebotarev's density theorem, the number $N$ of degree one primes $\{(x-a),a\in \F_q\}$ that are neither totally inert nor totally split in $K_1K_2$ is bounded by 
$$ \left|N - \frac{q}{2} \right| \leq 2 g(K_1K_2)\sqrt{q}$$
where $g(K_1K_2)$ is the genus of $K_1K_2$. By the Riemann-Hurwitz genus formula, $g(K_1K_2) = k-1 \leq \sqrt{q}/2$. Hence when $a \in \F_q$ is chosen at random, $(x-a)$ is neither totally inert nor totally split in $K_1K_2$ with probability close to $1/2$. 
\end{proof}
In summary, we have thus proven
\begin{theorem}
Algorithm \ref{factoring_algorithm} factors degree $n$ polynomials over $\F_q$ in expected time $\widetilde{O}(n^2 \log q + n \log^2 q)$.
\end{theorem}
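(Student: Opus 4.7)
The plan is to combine three ingredients: the correctness of a single call to Algorithm \ref{factoring_algorithm}, a bound on its per-call running time, and a bound on the expected depth of the recursion induced by Lemma \ref{splitting_lemma}.

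For correctness, fix a call on input $(f(x),m)$ executing Step $2$ with some $a\in\F_q$, and let $\phi$ be the resulting Drinfeld module with complex multiplication by $\F_q(x)(\sqrt{x-a})$. The recurrence defining $r_k(x)$ in Step $4$ is exactly (\ref{eisenstein_recurrence}) performed modulo $f(x)$, so $r_k(x) \equiv r_{\phi,k}(x) \pmod{f(x)}$. If $p(x)$ is an irreducible factor of $f(x)$ of degree $k$ and $\p:=(p(x))$, then Deligne's congruence (\ref{deligne_congruence}) together with the implications (\ref{supersingular_zero})--(\ref{supersingular_nonzero}) shows $p(x) \mid r_k(x)$ iff $\p$ is supersingular at $\phi$. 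Hence $\gcd(r_k(x),f(x))$ is exactly the product of degree-$k$ supersingular factors of $f(x)$, and $\gcd(x^{q^k}-x,f(x))/\gcd(r_k(x),f(x))$ is exactly the product of degree-$k$ ordinary factors. Each irreducible factor of $f(x)$ is therefore either emitted as output (when alone in its bucket) or routed into $f_{ss}$ or $f_{or}$ to be separated by recursion, so every irreducible factor is eventually output.

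For the per-call running time on input of degree $n$: Step $1$ costs $\widetilde{O}(n\log q + n\log^2 q)$ via standard root finding. Step $3$, precomputing $x^{q^k}, g_\phi(x)^{q^k}, \Delta_\phi(x)^{q^k} \pmod{f(x)}$ for $k\le m$ via multipoint evaluation \cite{gs}, costs $\widetilde{O}(n^2\log q + n\log^2 q)$. Step $4$ performs $m\le n$ iterations, each consisting of $O(1)$ multiplications and gcds in $A/(f)$, each of cost $\widetilde{O}(n\log q)$, for a total of $\widetilde{O}(n^2\log q)$. Summing, one call costs $\widetilde{O}(n^2\log q + n\log^2 q)$. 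To turn this into an expected total, at any fixed level $t$ of the recursion tree, the subproblem polynomials have degrees $n_1,\ldots,n_s$ with $\sum_i n_i \le n$; by convexity of $x\mapsto x^2$, the total cost of level $t$ is still $\widetilde{O}(n^2\log q + n\log^2 q)$. Lemma \ref{splitting_lemma} guarantees that any pair of distinct irreducible factors of the same degree is separated at the current level with probability close to $1/2$. Because each recursive call draws a fresh $a\in\F_q$, the separation events across levels are mutually independent for each pair; iterating for $T=C\log n$ levels and taking a union bound over the at most $\binom{n}{2}$ factor-pairs shows the recursion terminates within $O(\log n)$ levels with probability $1-1/\mathrm{poly}(n)$. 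The rare event of deeper recursion contributes negligibly because the depth is trivially bounded by $n$. Multiplying the per-level bound by the expected depth yields the claimed expected running time $\widetilde{O}(n^2\log q + n\log^2 q)$.

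The main obstacle is the recursion-depth analysis: one must verify that the separation probability given by Lemma \ref{splitting_lemma} genuinely applies to every pair in every recursive call (in particular that the hypothesis $k\le\sqrt{q}$ is maintained, which the paper addresses via the blanket assumption $\sqrt{q}\ge 100n$ together with the removal of linear factors in Step $1$), and that independence across levels delivers $O(\log n)$ expected depth rather than a weaker polylogarithmic bound. The remaining cost accounting is standard recursion-tree bookkeeping, and the multipoint-evaluation step is handled by directly invoking \cite{gs}.
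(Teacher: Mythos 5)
Your proposal follows the same route the paper takes: correctness via Deligne's congruence (\ref{deligne_congruence}) and the supersingular/ordinary dichotomy (\ref{supersingular_zero})--(\ref{supersingular_nonzero}), per-call cost dominated by the von zur Gathen--Shoup multipoint evaluation in Step~3 and the iteration in Step~4, and recursion depth controlled by Lemma~\ref{splitting_lemma}. In fact you make the recursion-depth analysis more explicit than the paper does (the paper simply asserts that the splitting lemma ``ensures'' logarithmic depth), so on that front you are closer to a complete argument.

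One detail, though, is not quite right: you assert that ``the depth is trivially bounded by $n$'' to dispose of the tail. That is false. If a bad draw of $a$ leaves all surviving factors in the same class (for instance, every degree-$k$ factor of $f$ supersingular for the chosen $\phi$), then $f_{ss}=f$, $m_{ss}=m$, and the recursive call is on the \emph{same} instance; a priori the recursion depth is unbounded. The correct patch is precisely the geometric-tail estimate your union bound already gives: for each still-unseparated pair, the probability of survival past $t$ fresh draws is at most $(1/2+o(1))^{t}$, hence $P(\text{depth}>t)\le \binom{n}{2}(1/2+o(1))^{t}$, and summing over $t$ gives expected depth $O(\log n)$ directly, with no need for any deterministic cap on the depth. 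With that substitution your argument is complete and matches the paper's intent.
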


\section{Deterministic Equal Degree Factorization using Drinfeld Modules}\label{deterministic_section}
In this section, we devise a deterministic algorithm for the equal degree factorization problem using Drinfeld modules with complex multiplication. Throughout this section, we assume that $q$ is a prime, the case most interesting to devise deterministic algorithms for.\\ \\
As in Algorithm \ref{factoring_algorithm}, we run in to difficulties of bad reduction if the polynomial to be factored has roots. Hence, as a preprocessing step, we extract and remove all the roots using a deterministic root finding algorithm \cite{sho}. Our run time dependence on $q$ is $\widetilde{O}(\sqrt{q})$, matching the best known existing algorithm of Shoup \cite{sho}. Our main result (Theorem \ref{deterministic_theorem}) in this section may thus be viewed as an alternative to Theorem 1 of Shoup \cite{sho}, particularly when the polynomial to be factored has at most one root.\\ \\
The proof techniques in this section were inspired by the proof of Lemma 3.2 in Shoup's deterministic algorithm \cite{sho}. The likeness of our proof of Lemma \ref{deterministic_lemma} to Lemma 5.2 in \cite{sho1} is noteworthy. 
\subsection{The Deterministic Equal Degree Factorization Algorithm}
We now derandomize Algorithm \ref{factoring_algorithm} in a natural manner by fixing an order in which the Drinfeld modules with complex multiplication are chosen. 
%Like in Algorithm \ref{factoring_algorithm}, we run in to difficulties of bad reduction if the polynomial to be factored has roots. Hence, as a preprocessing step, we extract and remove all the roots using a deterministic root finding algorithm \cite{sho}. 
\begin{algorithm}[H]
\caption{Deterministic Equal Degree Factorization.}\label{deterministic_algorithm}
\textbf{Input:} The input to our algorithm is a monic square free polynomial $f(x) \in A $ of degree $n$ all of whose monic irreducible factors are of degree $k>1$.\\ \\ 
\textbf{Output:} Monic irreducible factors of $f(x)$.\\ \\
If $f(x)$ is irreducible, output $f(x)$. Else, iterate over $a \in \F_q$ in order $\{0,1,2,\ldots,q-1\}$. For each choice $a$, construct a Drinfeld module $\phi$ with complex multiplication by $\F_q(x)(\sqrt{x-a})$ as in \S~\ref{drinfeld_construction_subsection}. Compute the degree $k$ Hasse Invariant lift $r_{\phi,k}(x)$ using the recurrence \ref{eisenstein_recurrence} as in Algorithm \ref{factoring_algorithm}. If $\gcd(r_{\phi,k},f(x))$ is non trivial, split $f(x)$ and recursively apply our algorithm to the resulting factors. 
\end{algorithm}
Each iteration in Algorithm \ref{deterministic_algorithm} takes $\widetilde{O}(n^2 \log q + n \log^2 q)$ time from an analysis identical to Algorithm \ref{factoring_algorithm}. To bound the total running time, it suffices to bound the number of choices of $a$ attempted before a splitting occurs. For a splitting to never occur for $a \in \{0,1,\ldots,b\}$, there must exist two distinct monic irreducible factors $p_1(x)$ and $p_2(x)$ of $f(x)$ such that for each $a \in \{0,1,\ldots,b\}$, both $(p_1(x))$ and $(p_2(x))$ are both always either split or inert in $F_q(x)(\sqrt{x-a})/\F_q(x)$. 
For $i\in \{0,1\}$, $(p_i(x))$ splits in $\F_q(x)(\sqrt{x-a})/\F_q(x)$ if and only if $p_i(a)$ is a square. Hence, if $\chi$ denotes the quadratic character on $\F_q$, for a splitting to never occur for $a \in \{0,1,\ldots,b\}$, there must exist two distinct monic irreducible factors $p_1(x)$ and $p_2(x)$ of $f(x)$ such that $$\chi(p_1(a))\chi(p_2(a)) =1, \forall a \in \{0,1,\ldots,b\}.$$
The following Lemma \ref{deterministic_lemma} proves that $b$ is at worst $k\sqrt{q}\log q$. Knowing Lemma \ref{deterministic_lemma}, we can claim the following theorem since the total number of splittings required is at most $n/k$.
\begin{theorem}\label{deterministic_theorem}
Algorithm \ref{deterministic_algorithm} performs equal degree factorization of degree $n$ polynomials over a prime order field $\F_q$ in determinsitic $\widetilde{O}(n^3 \sqrt{q})$ time. 
\end{theorem}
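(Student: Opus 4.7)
The plan is to combine three ingredients: (i) a cost bound for each iteration of Algorithm \ref{deterministic_algorithm}, (ii) the bound from Lemma \ref{deterministic_lemma} on how many values of $a$ must be tried before a nontrivial split occurs, and (iii) the number of splittings that the recursion performs across its entire execution.

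First I would verify that each loop iteration (i.e.\ each choice $a \in \F_q$) runs in time $\widetilde{O}(n^2 \log q + n \log^2 q)$, by direct reuse of the cost analysis already done for Algorithm \ref{factoring_algorithm}: constructing $\phi$ from $a$ is $\widetilde{O}(n)$; the Frobenius-power tables $x^{q^j}, g_\phi^{q^j}, \Delta_\phi^{q^j} \bmod f(x)$ for $j \le k$ are computed by repeated squaring modulo $f(x)$ in time $\widetilde{O}(nk\log q) = \widetilde{O}(n^2 \log q)$; the recurrence \eqref{eisenstein_recurrence} then produces $r_{\phi,k}(x) \bmod f(x)$ in $k$ steps each of cost $\widetilde{O}(n)$; and the final $\gcd$ is $\widetilde{O}(n)$.

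Second, invoking Lemma \ref{deterministic_lemma} with the bound $b \le k\sqrt{q}\log q$, I would argue that for any recursively produced factor $g(x)$ of $f(x)$ with at least two distinct monic irreducible factors $p_1,p_2$, the loop must encounter a value $a \in \{0,1,\dots,\lceil k\sqrt{q}\log q \rceil\}$ for which $\chi(p_1(a))\chi(p_2(a)) = -1$, forcing $\gcd(r_{\phi,k}(x), g(x))$ to be a proper divisor of $g(x)$. Hence the number of iterations spent on any single subproblem before a nontrivial split is $O(k\sqrt{q}\log q)$.

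Finally, since all irreducible factors of $f(x)$ have degree $k$, the input has exactly $n/k$ irreducible factors, and the recursion tree performs at most $n/k - 1$ nontrivial splits in total before all leaves are irreducible. Multiplying: total running time is at most
\[
(n/k) \cdot O(k\sqrt{q}\log q) \cdot \widetilde{O}(n^2 \log q + n \log^2 q) \;=\; \widetilde{O}(n^3\sqrt{q}),
\]
as claimed. The genuine content of the theorem is Lemma \ref{deterministic_lemma}; granted that lemma, the theorem itself is essentially bookkeeping, and the main obstacle is making sure that the budget of $O(k\sqrt q\log q)$ tried values of $a$ is charged correctly to each subproblem rather than redone for every recursive call in a way that would inflate the total count.
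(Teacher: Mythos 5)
Your proposal is correct and follows the same three-part decomposition as the paper: per-iteration cost $\widetilde{O}(n^2\log q + n\log^2 q)$ carried over from Algorithm~\ref{factoring_algorithm}, at most $O(k\sqrt{q}\log q)$ trial values of $a$ per subproblem by Lemma~\ref{deterministic_lemma}, and at most $n/k$ splits overall, multiplied together. Your closing worry about double-charging the budget of $a$-values is not an actual obstacle — the bound is just a union bound over subproblems, each of which independently needs at most $O(k\sqrt{q}\log q)$ iterations — and the paper disposes of it the same way you do.
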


\begin{lemma}\label{deterministic_lemma}
Let $q$ be an odd prime and $\chi$ the quadratic character on $\F_q$. Let $p_1(x),p_2(x) \in \F_q[x]$ be two monic irreducible polynomials of the same degree $d>1$. If $$\chi(p_1(a))\chi(p_2(a)) =1, \forall a \in \{z,z+1,\ldots,z+b-1\},$$
for some $z \in \F_q$, then $b \leq 2d \sqrt{q} \log q.$
\end{lemma}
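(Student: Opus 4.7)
The plan is to reinterpret the hypothesis as a lower bound on a partial character sum and then match it with an upper bound obtained by combining the Weil bound for complete character sums with a classical Polya--Vinogradov completion argument.

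First, set $P(x) := p_1(x) p_2(x) \in \F_q[x]$. Since the calling context forces $p_1 \neq p_2$ (otherwise the hypothesis is vacuous while the conclusion fails), $P$ is square-free of degree $2d$; since each $p_i$ is irreducible of degree $d>1$, it has no roots in $\F_q$, and hence $P(a) \neq 0$ for every $a \in \F_q$. Using multiplicativity of $\chi$ on $\F_q^\times$, the hypothesis becomes $\chi(P(a))=1$ for every $a$ in the interval $I := \{z,z+1,\ldots,z+b-1\}$, so
$$\sum_{a\in I}\chi(P(a)) \;=\; b.$$
The remainder of the proof is devoted to showing that this partial sum is at most $2d\sqrt{q}\log q$.

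To bound it, I would run the standard Polya--Vinogradov completion on $\F_q$. Fix a non-trivial additive character $\psi:\F_q\to\C^\times$, and for each $t\in\F_q$ set $T(t):=\sum_{a\in\F_q}\chi(P(a))\psi(ta)$. Because $P$ is square-free of degree $2d$, Weil's bound for character sums on curves gives $|T(0)| \leq (2d-1)\sqrt{q}$; for $t\neq 0$ the twisted (mixed) Weil bound yields $|T(t)| \leq 2d\sqrt{q}$. Expanding the indicator $\mathbf{1}_I(a)=q^{-1}\sum_t \hat{\mathbf{1}}_I(t)\,\psi(ta)$ with $\hat{\mathbf{1}}_I(0)=b$ and $|\hat{\mathbf{1}}_I(t)|\leq 1/|1-\psi(t)|$ for $t\neq 0$, one gets
$$b \;=\; \sum_{a\in I}\chi(P(a)) \;=\; \frac{1}{q}\sum_{t\in\F_q}\hat{\mathbf{1}}_I(t)\,T(t).$$
The $t=0$ term contributes at most $b(2d-1)/\sqrt{q}$, a small multiple of $b$ absorbable into the left side, while the standard estimate $\sum_{t\neq 0}1/|1-\psi(t)| = O(q\log q)$ bounds the remaining terms by $O(d\sqrt{q}\log q)$.

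Combining the two contributions yields $b \leq 2d\sqrt{q}\log q$, as claimed. The main obstacle in executing this plan is verifying the non-triviality hypotheses of Weil's bound for the twisted complete sums $T(t)$ with $t\neq 0$, and tracking the constants tightly enough to land on exactly $2d\sqrt{q}\log q$ rather than something slightly worse; a secondary cosmetic point is flagging the implicit assumption $p_1 \neq p_2$, which is what guarantees that $P$ is not a square and so makes $\chi\circ P$ non-trivial. The paragraph preceding the lemma signals that the author intends essentially this argument, patterned on the proof of Lemma 5.2 in Shoup's \cite{sho1}.
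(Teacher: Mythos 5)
Your proposal is correct in outline but takes a genuinely different route from the paper. The paper proves the lemma by a Stepanov/Shoup-style point-counting argument: it introduces an auxiliary affine variety $\mathcal{X}$ cut out by the $m$ equations $Y_i^2 = p_1(Z+i)p_2(Z+i)$, obtains the lower bound $(b-m)2^m \le N_m$ on the number of $\F_q$-points directly from the hypothesis, and then upper-bounds $N_m$ by expanding $\prod_i\bigl(1+\chi(p_1(\bar z+i)p_2(\bar z+i))\bigr)$ and applying the Weil bound to each complete sum $\sum_{\bar z}\chi(h_e(\bar z))$; choosing $m\approx\log_2\sqrt q$ gives the result. (This is the same mechanism as Shoup's Lemma~5.2 in \cite{sho1} --- not a P\'olya--Vinogradov argument, so your last sentence misattributes the technique.) You instead complete the incomplete sum $\sum_{a\in I}\chi(p_1(a)p_2(a))$ via additive-character Fourier analysis, bounding the twisted complete sums $T(t)$ with the mixed Weil bound and then summing $|\hat{\mathbf 1}_I(t)|$ over $t\ne 0$. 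Both routes rest on the Weil bound and yield a bound of shape $d\sqrt q\log q$; the variety-counting route avoids additive characters entirely and makes the constant transparent, while the P\'olya--Vinogradov route is shorter and more recognizable but requires you to be careful with the factor of two in $|\hat{\mathbf 1}_I(t)|\le 2/|1-\psi(t)|$ (you dropped it) and with the degenerate regime $\sqrt q \le 2d-1$ where the absorption of the $t=0$ term fails (there the claimed bound exceeds $q$ and is vacuous, so this is harmless but should be said). Your observation that the statement implicitly requires $p_1\ne p_2$ --- so that $p_1p_2$ is not a square and $\chi\circ(p_1p_2)$ is nonconstant --- is correct and in fact is used silently in the paper's proof as well, where $h_e(Z)$ being a nonsquare for $e\ne 0$ relies on it.
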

\begin{proof}
Let $p_1(x),p_2(x) \in \F_q[x]$ be two monic irreducible polynomials of degree $d>1$ with 
\begin{equation}\label{quadratic_char_assumption}
\chi(p_1(a))\chi(p_2(a)) =1, \forall a \in \{z,z+1,\ldots,z+b-1\}
\end{equation}
%Let $p$ be an odd prime and $g_1,g_2 \in A$ two distinct monic irreducibles. If $$\chi(g_1(t+1)g_2(t+1))=\chi(g_1(t+2)g_2(t+2))=\ldots=\chi(g_1(t+b)g_2(t+b))=1,$$ then $b \leq \sqrt{p}\log p$.\\ \\
%Assume $$\chi(g_1(z)g_2(z))=\chi(g_1(z+1)g_2(z+1))=\ldots=\chi(g_1(z+b)g_2(z+b))=1$$
for some $z \in \F_p$ and some positive integer $b$.\\ \\
%Let $m=\lceil \sqrt{\log p}\rceil$.
Let $m$ be a positive integer less than $b$. Let $\mathcal{X}$ denote the affine $\F_q$ variety in the $m+1$ variables $Z,Y_0,Y_1,\ldots, Y_{m-1}$ defined by the system
\begin{equation}\label{system_equation}
\begin{cases}
Y_0^2=p_1(Z)p_2(Z) \\
Y_1^2=p_1(Z+1)p_2(Z+1) \\
Y_2^2=p_1(Z+2)p_2(Z+2)  \\
\ \ \ \ \ \vdots & \\
Y_{m-1}^2=p_1(Z+m-1)p_2(Z+m-1). 
\end{cases}
\end{equation}
Let $\mathcal{X}_m(\F_q)$ be the set of $\F_q$ rational points of $\mathcal{X}$ and $N_m:=|\mathcal{X}_m(\F_q)|$.\\ \\ % the number of $\F_p$ rational points.\\ \\
By equation \ref{quadratic_char_assumption}, there is a point in $\mathcal{X}_m(\F_q)$ with $Z=z$. In fact there are $2^m$ points in $\mathcal{X}_m(\F_q)$ with $Z=z$. To see this, if $(z,y_0,y_1,\ldots,y_{m-1})$ is in $\mathcal{X}_m(\F_q)$, then so is $(z,-y_0,-y_1,\ldots,-y_{m-1})$. Further, for all $i\in\{0,1,\ldots,m-1\}$, $y_i \neq - y_i$, for otherwise $p_1(x)$ or $p_2(x)$ would have roots, contradicting their irreducibility.\\ \\
More generally, by equation \ref{quadratic_char_assumption}, for every $i \in \{0,1,2,\ldots,b-m-1\}$, there are $2^m$ points in $\mathcal{X}_m(\F_q)$ with $Z=z + i $. Thereby, we have the bound 
%$$(b-m)2^m \leq N_m.$$
\begin{equation}\label{equation_bound_Nm}
(b-m)2^m \leq N_m.
\end{equation}
Our eventual objective is to bound $b$. To this end, we next bound $N_m$.\\ \\
For a $\bar{z} \in \F_q$, the number of points in $\mathcal{X}_m(\F_q)$ with $Z=\bar{z}$ is $$\prod_{i=0}^{m-1}\left(1 + \chi(p_1(\bar{z} + i)p_2(\bar{z} + i))\right).$$
Hence $$N_m = \sum_{\bar{z} \in \F_q} \prod_{i=0}^{m-1}\left(1 + \chi(p_1(\bar{z} + i)p_2(\bar{z} + i))\right)$$
$$ = \sum_{\bar{z} \in \F_q}  \sum_{e_0,e_1,\ldots,e_{m-1} \in \{0,1\}^m} \prod_{i=0}^{m-1}\left(\chi(p_1(\bar{z} + i)p_2(\bar{z} + i))\right)^{e_i}$$
\begin{equation}\label{N_equation}
=  \sum_{e_0,e_1,\ldots,e_{m-1} \in \{0,1\}^m}\sum_{\bar{z} \in \F_q} \chi\left( \prod_{i=0}^{m-1}(p_1(\bar{z} + i)p_2(\bar{z} + i))^{e_i}\right).
\end{equation}
For $e:=(e_0,e_1,\ldots,e_{m-1})\in \{0,1\}^{m}$, let $\ell_e$ denote $\sum_{i=0}^{m-1}e_i$ and define $$h_e(Z):= \prod_{i=0}^{m-1}(p_1(Z + i)p_2(Z + i))^{e_i} \in \F_q[Z].$$
For $e=(0,0,\ldots,0)$, $h_e(Z)=1$. For $e \neq (0,0,\ldots,0)$,
since $h_e(Z)$ is not a square and has degree at most $2d\ell_e$, by the Weil bound \cite{sch}, $$\sum_{\bar{z} \in \F_q} \chi(h_e(\bar{z})) \leq (2d\ell_e-1) \sqrt{q}$$
which when substituted in equation \ref{equation_bound_Nm} yields
$$ N_m =  \sum_{e \in \{0,1\}^m} \sum_{\bar{z} \in \F_q} \chi(h_e(\bar{z})) \leq q + \sqrt{q} \sum_{e \in \{0,1\}^m\setminus (0,0,\ldots,0)}(2d\ell_e-1).$$
$$= q+\sqrt{q}\sum_{\ell=1}^{m}\binom{m}{\ell}(2 d\ell -1) \leq q + \sqrt{q}(2^m-1)(2dm-1).$$
Substituting this bound for $N_m$ in equation \ref{N_equation}, we get $$(b-m)2^m \leq q + \sqrt{q}(2^m-1)(2dm-1).$$ We get to choose $m$ to optimize the bound on $b$. Setting $m= \lceil\log_2 \sqrt{q}\rceil$, since $d>1$ and $q >2$,
$$b \leq 2d \sqrt{q} \log q.$$

\end{proof}

\affiliationone{% in this example, two authors share an institution
   Anand Kumar Narayanan\\
   Computing and Mathematical Sciences\\
   California Institute of Technology\\
   \email{anandkn@caltech.edu}}
\end{document}